\newtheorem{prop}{Proposition}[section]
\newtheorem{lemma}[prop]{Lemma}
\newtheorem{theorem}[prop]{Theorem}
\theoremstyle{definition}
\newtheorem{remark}[prop]{Remark}
\newcommand{\N}{\mathbb{N}}
\newcommand{\seqnum}[1]{\href{https://oeis.org/#1}{\rm \underline{#1}}}
\newcommand{\mylabel}[2]{#2\def\@currentlabel{#2}\label{#1}}
\begin{document}

\title{Equivalent conditions for the $n$th element of the Beatty sequence $B_{\sqrt{2}}$ being even}

\author{Sela Fried\thanks{The author is a teaching fellow in the Department of Computer Science at the Israel Academic College in Ramat Gan.} 
\\ \href{mailto:friedsela@gmail.com}{friedsela@gmail.com}}
\date{} 
\maketitle

\begin{abstract}
We provide equivalent conditions for the $n$th element of the Beatty sequence $B_{\sqrt{2}}$ being even. In particular, we show that the integer sequences \seqnum{A090892} and \seqnum{A120752} in the OEIS are essentially identical. 
\end{abstract} 
	
\section{Introduction}

A Beatty sequence is the sequence of integers obtained by taking the floor of the positive multiples of a positive irrational number, i.e., if $r>0$ is irrational, then the corresponding Beatty sequence $B_r$ is given by $$B_r = \left(\left\lfloor nr \right\rfloor\right)_{n\in\N}.$$ Beatty sequences are named after Samuel Beatty who brought them to the attention of the mathematical community by posing a problem in the American Mathematical Monthly \cite{Beat}, in which the readers of the journal were asked to prove that, if $r>1$ is irrational and $s=r/(r-1)$, then $B_r$ and $B_s$ are complementary sequences, i.e., every natural number belongs to exactly one of the two sequences. 

To the best of our knowledge, little attention has been given to the parities of the elements of Beatty sequences. That these might prove interesting is perhaps suggested by the image below, that visualizes the first $10^5$ elements of the parity sequence of $B_{\sqrt{2}}$ (cf.\ \seqnum{A083035} in the On-Line Encyclopedia of Integer Sequences (OEIS) \cite{OL}). The visualization  
relies on a simple but ingenious method capable of visualizing any binary sequence $(a_n)_{n\in\N}$: Start at $(0,0)$, facing in the direction of the vector $(1,0)$. Now, for every $n\in\N$, execute the following two actions: (a) Turn right if $a_n = 0$ and left otherwise. (b) Proceed forward one step of unit length.  

The result of this process is a walk in the plane that we refer to as the \emph{Cloitre walk of $(a_n)_{n\in\N}$}, since Benoit Cloitre seems to be the first to use this method.

\begin{figure}[H]
\centering{\includegraphics[width=1.00\textwidth]{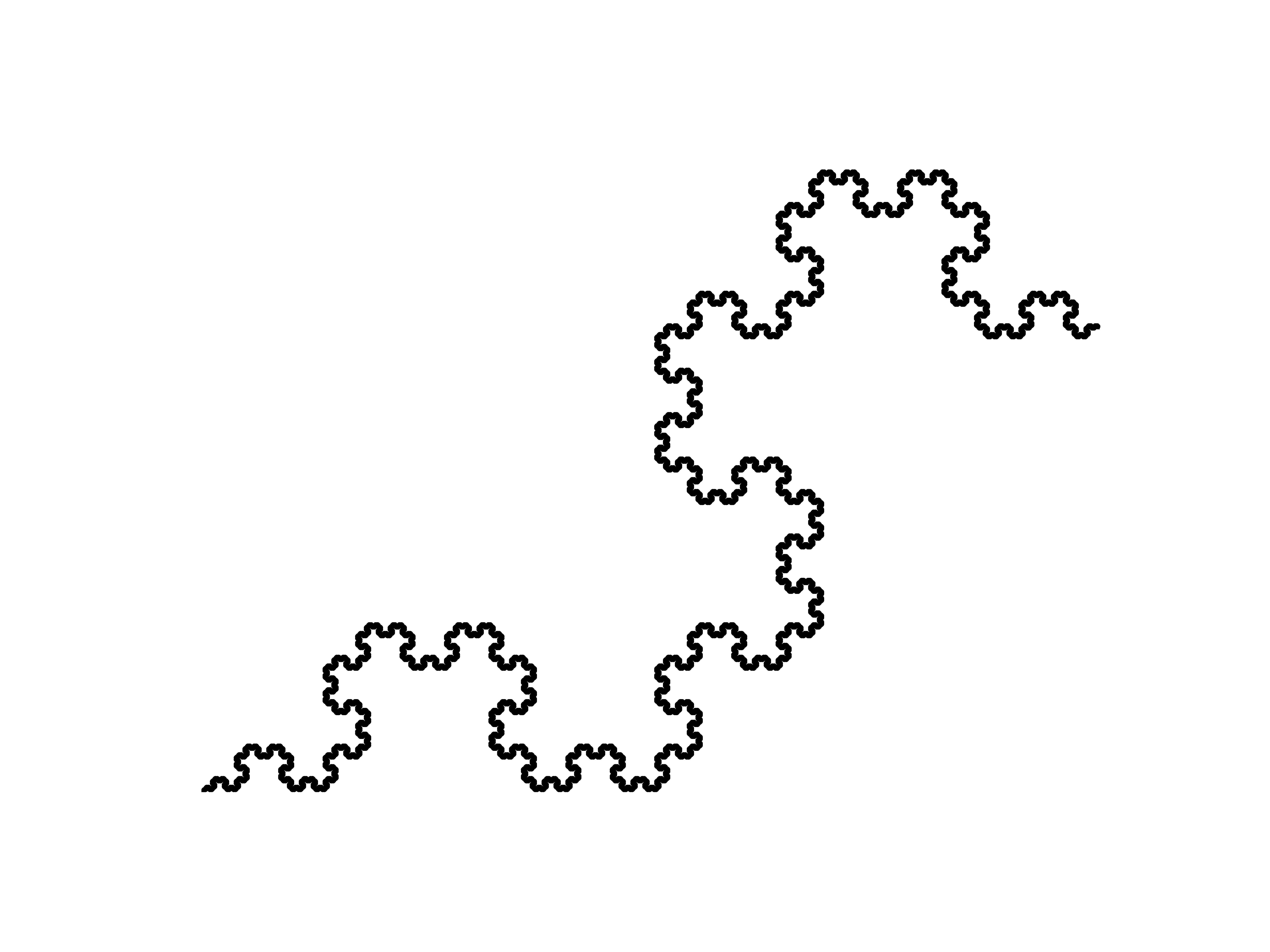}}
\caption{The Cloitre walk of \seqnum{A083035} that corresponds to the sequence of parities of $B_{\sqrt{2}}$.}
\label{fig;eg}
\end{figure}

In this short note we provide several equivalent conditions for the $n$th element of the Beatty sequence $B_{\sqrt{2}}$ being even. Along the way we prove that \seqnum{A090892} and \seqnum{A120752} are essentially identical since deleting the first two elements of the former sequence results in the latter (cf.\ \cite{S}).

\section{Main results}

We denote by $\N$ the set of natural numbers and by $\{x\}$ the fractional part of a real number $x$. The purpose of this note is to prove the following theorem. 

\begin{theorem}\label{thm;1}
    Let $2\leq n\in\N$. The following conditions are equivalent:
    \begin{enumerate}
        \item [(a)]\label{a}$\left\lfloor \sqrt{2}n\right\rfloor$ is even, i.e., the $n$th element of the Beatty sequence $B_{\sqrt{2}}$ is even.
        \item [(b)]\label{b} $\left\{\frac{n}{\sqrt{2}}\right\}\leq\frac{1}{2}$.
        \item [(c)]\label{c} $\left\{\frac{n}{\sqrt{2}}\right\}<\frac{1}{2}$.
        \item [(d)]\label{d} $\left\lfloor \sqrt{2}n\left\lfloor \frac{n}{\sqrt{2}}\right\rfloor \right\rfloor =\left\lfloor \frac{n}{\sqrt{2}}\left\lfloor \sqrt{2}n\right\rfloor \right\rfloor$.
        \item [(e)]\label{e} $\left\lfloor \frac{n}{\sqrt{2}}\right\rfloor =\left\lfloor \sqrt{n^{2}-\left\lfloor \frac{n}{\sqrt{2}}\right\rfloor ^{2}}\right\rfloor$.
        \item [(f)]\label{f} $\left\{\frac{n}{\sqrt{2}}\right\}< \sqrt{\left\lfloor \frac{n}{\sqrt{2}}\right\rfloor ^{2}+\left\lfloor \frac{n}{\sqrt{2}}\right\rfloor +\frac{1}{2}}-\left\lfloor \frac{n}{\sqrt{2}}\right\rfloor$.        
    \end{enumerate}
\end{theorem}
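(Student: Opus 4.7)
Proof plan for Theorem~\ref{thm;1}.

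\medskip

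The plan is to introduce uniform notation that lets all six conditions be read off from the same data. Write $\lfloor\sqrt{2}n\rfloor = 2k+r$ with $r\in\{0,1\}$ and set $s=\{\sqrt{2}n\}$, so $\sqrt{2}n = 2k+r+s$. Dividing by $2$ and using $n/\sqrt{2}=n\sqrt{2}/2$, I get
\[
\frac{n}{\sqrt{2}} = k + \frac{r}{2}+\frac{s}{2},
\]
and since $0\le r/2+s/2<1$ (strictly, because $s\ne 0$ by irrationality of $\sqrt{2}$), one reads off $\lfloor n/\sqrt{2}\rfloor = k$ and $\{n/\sqrt{2}\}=r/2+s/2$. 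With this, (a)$\Leftrightarrow$(b)$\Leftrightarrow$(c) is essentially immediate: condition (a) is $r=0$; condition (c) is $r/2+s/2<1/2$, equivalent to $r=0$ since $s>0$; and $r/2+s/2=1/2$ cannot happen, giving (b)$\Leftrightarrow$(c).

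\medskip

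For (d) I would expand both sides in the $(k,r,s)$ variables. A direct computation gives
\[
\bigl\lfloor \sqrt{2}n\,k\bigr\rfloor = 2k^{2}+kr+\lfloor ks\rfloor,\qquad
\Bigl\lfloor\tfrac{n}{\sqrt{2}}(2k+r)\Bigr\rfloor = 2k^{2}+2kr+\Bigl\lfloor\tfrac{r^{2}}{2}+ks+\tfrac{rs}{2}\Bigr\rfloor.
\]
If $r=0$ both sides equal $2k^{2}+\lfloor ks\rfloor$ and (d) holds. If $r=1$ the difference is $k+\bigl(\lfloor ks+(1+s)/2\rfloor-\lfloor ks\rfloor\bigr)\ge k$, and since $n\ge 2$ forces $k=\lfloor n/\sqrt{2}\rfloor\ge 1$, the two sides differ. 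Thus (d)$\Leftrightarrow$(a).

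\medskip

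The key observation that unlocks (e) is an integrality/parity argument. The condition $r=0$ is $2k\le\sqrt{2}n<2k+1$, i.e., $2k^{2}\le n^{2}<2k^{2}+2k+\tfrac{1}{2}$; similarly $r=1$ becomes $2k^{2}+2k+\tfrac{1}{2}\le n^{2}<2k^{2}+4k+2$. Because $n^{2}\in\Z$ and $2n^{2}$ is never an odd square, these sharpen to
\[
r=0\iff n^{2}\le 2k^{2}+2k,\qquad r=1\iff n^{2}\ge 2k^{2}+2k+1.
\]
On the other hand, $\lfloor\sqrt{n^{2}-k^{2}}\rfloor=k$ is equivalent to $k^{2}\le n^{2}-k^{2}<(k+1)^{2}$, whose left inequality is automatic and whose right inequality is exactly $n^{2}\le 2k^{2}+2k$. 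Thus (e)$\Leftrightarrow$(a).

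\medskip

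Finally, (e)$\Leftrightarrow$(f) is pure algebra: substitute $n=\sqrt{2}(k+\theta)$ with $\theta=\{n/\sqrt{2}\}$ into the inequality $n^{2}-k^{2}<(k+1)^{2}$ obtained above. This gives the quadratic inequality $\theta^{2}+2k\theta-k-\tfrac{1}{2}<0$, whose unique positive root is $\sqrt{k^{2}+k+\tfrac{1}{2}}-k$, yielding condition (f). I anticipate no real obstacle; the only subtle step is the integrality/parity sharpening in paragraph three, where the irrationality of $\sqrt{2}$ (so that $2n^{2}\neq(2k+1)^{2}$) is what removes an otherwise problematic edge case and makes the equivalence (a)$\Leftrightarrow$(e) crisp.
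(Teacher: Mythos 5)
Your proposal is correct: the expansion of both floors in (d), the squared characterizations of $r=0$ and $r=1$, and the quadratic in $\theta$ all check out, and the hypothesis $n\ge 2$ enters exactly where you invoke it, namely to force $k=\left\lfloor n/\sqrt{2}\right\rfloor\ge 1$ in the $r=1$ case of (d). Your route shares its arithmetic kernel with the paper's---your parity bit $r$ is precisely the quantity $\left\lfloor \sqrt{2}n\right\rfloor -2\left\lfloor n/\sqrt{2}\right\rfloor\in\{0,1\}$ that the paper isolates in (\ref{eq;1})---but the organization is genuinely different. The paper hubs the equivalences through condition (c), proves (e) $\iff$ (f) and (c) $\iff$ (f) by separate chains of inequality manipulations in $\sigma=\left\{n/\sqrt{2}\right\}$, and needs Lemma \ref{lem;1} twice, plus an integrality argument to exclude the window $\frac{1}{2}\le\sigma<\sqrt{\lfloor n/\sqrt{2}\rfloor^{2}+\lfloor n/\sqrt{2}\rfloor+\frac{1}{2}}-\lfloor n/\sqrt{2}\rfloor$. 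Your uniform parametrization $\sqrt{2}n=2k+r+s$ instead makes every condition a statement about $r$ alone and derives both (e) and (f) from the single integer dichotomy $n^{2}\le 2k^{2}+2k$ versus $n^{2}\ge 2k^{2}+2k+1$; in particular Lemma \ref{lem;1} is never needed, because you compare $\theta$ directly with the positive root $\sqrt{k^{2}+k+\frac{1}{2}}-k$ rather than with $\frac{1}{2}$. What each buys: the paper's argument is self-contained condition by condition, while yours is shorter, exposes the common mechanism (every condition is decided by the parity bit), and localizes the role of $n\ge 2$. One small remark: in your sharpening step the clause ``$2n^{2}$ is never an odd square'' is superfluous---integrality of $n^{2}$ alone turns $n^{2}<2k^{2}+2k+\frac{1}{2}$ into $n^{2}\le 2k^{2}+2k$---though it is true and harmless.
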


\begin{remark}
Conditions (\hyperref[b]{b}) and (\hyperref[d]{d}) correspond to the definitions of \seqnum{A120752} and \seqnum{A090892}, respectively. Cloitre made a comment to \seqnum{A090892} stating that conditions (\hyperref[a]{a}) and (\hyperref[d]{d}) are equivalent. 
\end{remark}

We shall make use of the following inequality.

\begin{lemma}\label{lem;1}
Let $x$ be a nonnegative real number. Then $$\sqrt{x^{2}+x+\frac{1}{2}}-x>\frac{1}{2}.$$ 
\end{lemma}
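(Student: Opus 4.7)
The plan is to reduce the claim to an elementary comparison of quadratics. I would first rewrite the desired inequality in the equivalent form
\[
\sqrt{x^{2}+x+\tfrac{1}{2}}>x+\tfrac{1}{2},
\]
which is legitimate because we are just isolating the square root on one side.

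Next, since $x\geq 0$, the right-hand side $x+\tfrac{1}{2}$ is strictly positive, and the left-hand side is also positive, so both sides may be squared without changing the direction of the inequality. Squaring, the $x^{2}+x$ terms cancel on both sides, reducing the statement to $\tfrac{1}{2}>\tfrac{1}{4}$, which is of course true.

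There is essentially no obstacle here; the only point worth flagging in the write-up is the justification that squaring preserves the strict inequality, which requires both sides to be nonnegative, and this is where the hypothesis $x\geq 0$ is used (to ensure $x+\tfrac{1}{2}>0$ so that the squaring step is reversible and the resulting inequality is equivalent to the original).
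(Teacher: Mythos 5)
Your proof is correct, but it takes a genuinely different (and shorter) route than the paper. You move $x$ to the right-hand side and square: since $x\geq 0$ guarantees $x+\tfrac{1}{2}>0$ and the left-hand side is nonnegative, the inequality $\sqrt{x^{2}+x+\tfrac{1}{2}}>x+\tfrac{1}{2}$ is equivalent to $x^{2}+x+\tfrac{1}{2}>x^{2}+x+\tfrac{1}{4}$, i.e., to $\tfrac{1}{2}>\tfrac{1}{4}$. The paper instead rationalizes, writing $\sqrt{x^{2}+x+\tfrac{1}{2}}-x=\bigl(x+\tfrac{1}{2}\bigr)/\bigl(\sqrt{x^{2}+x+\tfrac{1}{2}}+x\bigr)$, then enlarges the radicand to the perfect square $x^{2}+\sqrt{2}\,x+\tfrac{1}{2}=\bigl(x+\tfrac{1}{\sqrt{2}}\bigr)^{2}$ so the denominator becomes $2\bigl(x+\tfrac{1}{2\sqrt{2}}\bigr)<2\bigl(x+\tfrac{1}{2}\bigr)$, giving the bound $\tfrac{1}{2}$. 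Your argument is more elementary and avoids the slight awkwardness that the paper's first strict inequality degenerates to an equality at $x=0$ (the strictness there is rescued only by the later step); the paper's rationalization buys nothing extra here beyond stylistic preference. Both proofs are valid, and yours is the cleaner write-up.
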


\begin{proof}
We have
\begin{align}
\sqrt{x^{2}+x+\frac{1}{2}}-x	&=\frac{x+\frac{1}{2}}{\sqrt{x^{2}+x+\frac{1}{2}}+x}\nonumber\\
	&>\frac{x+\frac{1}{2}}{\sqrt{x^{2}+\sqrt{2}x+\frac{1}{2}}+x}\nonumber\\
	&=\frac{x+\frac{1}{2}}{2x+\frac{1}{\sqrt{2}}}\nonumber\\
	&=\frac{x+\frac{1}{2}}{2\left(x+\frac{1}{\sqrt{2}}\frac{1}{2}\right)}\nonumber\\
	&>\frac{x+\frac{1}{2}}{2\left(x+\frac{1}{2}\right)}=\frac{1}{2}.  \nonumber\qedhere  
\end{align}
\end{proof}

\begin{proof}[Proof of Theorem \ref{thm;1}]
``(\hyperref[e]{e}) $\iff$ (\hyperref[f]{f})": We have 
\begin{align}
\left\lfloor \frac{n}{\sqrt{2}}\right\rfloor =\left\lfloor \sqrt{n^{2}-\left\lfloor \frac{n}{\sqrt{2}}\right\rfloor ^{2}}\right\rfloor&\iff \left\lfloor \frac{n}{\sqrt{2}}\right\rfloor \leq\sqrt{n^{2}-\left\lfloor \frac{n}{\sqrt{2}}\right\rfloor ^{2}}<\left\lfloor \frac{n}{\sqrt{2}}\right\rfloor +1\nonumber\\
&\iff 2\left\lfloor \frac{n}{\sqrt{2}}\right\rfloor^2 \leq n^{2}<2\left\lfloor \frac{n}{\sqrt{2}} \right\rfloor^2 +2\left\lfloor \frac{n}{\sqrt{2}} \right\rfloor+1.\label{eq;11}
\end{align}
Now, since $$2\left\lfloor \frac{n}{\sqrt{2}}\right\rfloor^2 \leq2\left(\frac{n}{\sqrt{2}}\right)^2=n^2,$$ we have 
\begin{align}
(\ref{eq;11})&\iff n^{2}<2\left\lfloor \frac{n}{\sqrt{2}} \right\rfloor^2 +2\left\lfloor \frac{n}{\sqrt{2}} \right\rfloor+1.\label{eq;22}
\end{align}
Let us denote $\left\{\frac{n}{\sqrt{2}}\right\}$ by $\sigma$. Thus, $\sigma\in[0,1)$ and $\left\lfloor \frac{n}{\sqrt{2}}\right\rfloor =\frac{n}{\sqrt{2}}-\sigma$. Then 
\begin{align}
(\ref{eq;22})&\iff n^{2}<2\left(\frac{n}{\sqrt{2}}-\sigma\right)^{2}+2\left(\frac{n}{\sqrt{2}}-\sigma\right)+1\nonumber\\&\iff \sqrt{2}n(2\sigma-1)<\overbrace{2\sigma^{2}-2\sigma+1}^{>0 \textnormal{ for every }\sigma}\nonumber \\ &\iff\left(\sigma \leq \frac{1}{2}\right) \textnormal{ or } \left(\sigma > \frac{1}{2} \textnormal{ and } n<\frac{2\sigma^{2}-2\sigma+1}{\sqrt{2}(2\sigma-1)}\right).\label{eq;33}
\end{align}
Now, 
\begin{align}
\sigma > \frac{1}{2} \textnormal{ and } n<\frac{2\sigma^{2}-2\sigma+1}{\sqrt{2}(2\sigma-1)}&\iff \sigma > \frac{1}{2} \textnormal{ and } \left\lfloor \frac{n}{\sqrt{2}}\right\rfloor <\frac{1-2\sigma^{2}}{2(2\sigma-1)}\nonumber\\&\iff \sigma > \frac{1}{2} \textnormal{ and } 2\sigma^{2}+4\left\lfloor \frac{n}{\sqrt{2}}\right\rfloor \sigma-1-2\left\lfloor \frac{n}{\sqrt{2}}\right\rfloor <0\nonumber\\&\iff \frac{1}{2}<\sigma< \overbrace{\sqrt{\left\lfloor \frac{n}{\sqrt{2}}\right\rfloor ^{2}+\left\lfloor \frac{n}{\sqrt{2}}\right\rfloor +\frac{1}{2}}-\left\lfloor \frac{n}{\sqrt{2}}\right\rfloor }^{>\frac{1}{2}, \textnormal{ by Lemma \ref{lem;1}}}.\nonumber
\end{align}
Thus,
$$(\ref{eq;33}) \iff \sigma<\sqrt{\left\lfloor \frac{n}{\sqrt{2}}\right\rfloor ^{2}+\left\lfloor \frac{n}{\sqrt{2}}\right\rfloor +\frac{1}{2}}-\left\lfloor \frac{n}{\sqrt{2}}\right\rfloor.$$

``(\hyperref[b]{b}) $\iff$ (\hyperref[c]{c})": This is clear.

``(\hyperref[c]{c}) $\iff$ (\hyperref[d]{d})": Let us denote $\left\{\sqrt{2}n\left\lfloor \frac{n}{\sqrt{2}}\right\rfloor \right\}$ by $\sigma$. Thus, $$\left\lfloor \sqrt{2}n\left\lfloor \frac{n}{\sqrt{2}}\right\rfloor \right\rfloor =\sqrt{2}n\left\lfloor \frac{n}{\sqrt{2}}\right\rfloor -\sigma.$$ Then,
\begin{align}
&\left\lfloor \sqrt{2}n\left\lfloor \frac{n}{\sqrt{2}}\right\rfloor \right\rfloor =\left\lfloor \frac{n}{\sqrt{2}}\left\lfloor \sqrt{2}n\right\rfloor \right\rfloor\iff \nonumber\\&
\left\lfloor \sqrt{2}n\left\lfloor \frac{n}{\sqrt{2}}\right\rfloor \right\rfloor \leq\frac{n}{\sqrt{2}}\left\lfloor \sqrt{2}n\right\rfloor <\left\lfloor \sqrt{2}n\left\lfloor \frac{n}{\sqrt{2}}\right\rfloor \right\rfloor +1
\iff\nonumber\\
&n\left(2\left\lfloor \frac{n}{\sqrt{2}}\right\rfloor -\left\lfloor \sqrt{2}n\right\rfloor \right)\leq\sqrt{2}\sigma \;\text{ and }\;
\frac{n}{\sqrt{2}}\left(\left\lfloor \sqrt{2}n\right\rfloor -2\left\lfloor \frac{n}{\sqrt{2}}\right\rfloor \right)<1-\sigma.\label{eq;66}
\end{align}
It is easy to see that, for every nonnegative real number $x$ and $m\in\N$, we have $$\left\lfloor x\right\rfloor -m\left\lfloor \frac{x}{m}\right\rfloor \in\left\{ 0,1,\ldots,m-1\right\}.$$ In particular, \begin{equation}\label{eq;1}\left\lfloor \sqrt{2}n\right\rfloor -2\left\lfloor \frac{n}{\sqrt{2}}\right\rfloor\in\{0,1\}.\end{equation} It follows that the first inequality in (\ref{eq;66}) holds trivially. Furthermore, since $n\geq 2$, if $\left\lfloor \sqrt{2}n\right\rfloor -2\left\lfloor \frac{n}{\sqrt{2}}\right\rfloor=1$, then the second inequality in (\ref{eq;66}) cannot hold. We conclude that \begin{align}
(\ref{eq;66}) & \iff \left\lfloor \sqrt{2}n\right\rfloor =2\left\lfloor \frac{n}{\sqrt{2}}\right\rfloor\nonumber\\&\iff 2\left\lfloor \frac{n}{\sqrt{2}}\right\rfloor \leq\sqrt{2}n<2\left\lfloor \frac{n}{\sqrt{2}}\right\rfloor +1.\label{eq;88}
\end{align}
The inequality $2\left\lfloor \frac{n}{\sqrt{2}}\right\rfloor \leq\sqrt{2}n$ holds trivially. Thus, 
\begin{align}
(\ref{eq;88}) & \iff \sqrt{2}n<2\left\lfloor \frac{n}{\sqrt{2}}\right\rfloor +1.\nonumber\\&\iff\frac{n}{\sqrt{2}}<\left\lfloor \frac{n}{\sqrt{2}}\right\rfloor +\frac{1}{2}\nonumber\\&\iff\left\{\frac{n}{\sqrt{2}}\right\}<\frac{1}{2}.\nonumber
\end{align}

``(\hyperref[c]{c}) $\iff$ (\hyperref[f]{f})": 
By Lemma \ref{lem;1}, $$\sqrt{\left\lfloor \frac{n}{\sqrt{2}}\right\rfloor ^{2}+\left\lfloor \frac{n}{\sqrt{2}}\right\rfloor +\frac{1}{2}}-\left\lfloor \frac{n}{\sqrt{2}}\right\rfloor >\frac{1}{2}$$ and that settles the ``$\Longrightarrow$" implication. For the other implication, we need to show that there exists no $2\leq n\in\N$ such that 
\begin{equation}\label{eq;1122}
\frac{1}{2}\leq\left\{\frac{n}{\sqrt{2}}\right\} <\sqrt{\left\lfloor \frac{n}{\sqrt{2}}\right\rfloor ^{2}+\left\lfloor \frac{n}{\sqrt{2}}\right\rfloor +\frac{1}{2}}-\left\lfloor \frac{n}{\sqrt{2}}\right\rfloor.
\end{equation}
Setting $\sigma=\left\{\frac{n}{\sqrt{2}}\right\}$, we have 
\begin{align}
(\ref{eq;1122})&\iff\frac{1}{2}\leq\sigma<\sqrt{\left(\frac{n}{\sqrt{2}}-\sigma\right)^{2}+\frac{n}{\sqrt{2}}-\sigma+\frac{1}{2}}-\left(\frac{n}{\sqrt{2}}-\sigma\right)\nonumber\\&\iff\frac{1}{2}+\left(\frac{n}{\sqrt{2}}-\sigma\right)\leq\sigma+\left(\frac{n}{\sqrt{2}}-\sigma\right)<\sqrt{\left(\frac{n}{\sqrt{2}}-\sigma\right)^{2}+\frac{n}{\sqrt{2}}-\sigma+\frac{1}{2}}\nonumber\\&\iff \left(\frac{n}{\sqrt{2}}+\frac{1}{2}-\sigma\right)^{2}\leq\frac{n^{2}}{2}<\left(\frac{n}{\sqrt{2}}-\sigma\right)^{2}+\frac{n}{\sqrt{2}}-\sigma+\frac{1}{2}\nonumber \\&\iff0<\sigma^{2}-\sqrt{2}n\sigma+\frac{n}{\sqrt{2}}-\sigma+\frac{1}{2}\leq\frac{1}{4}\nonumber\\&\iff 0<-\left(\frac{n}{\sqrt{2}}-\left\lfloor \frac{n}{\sqrt{2}}\right\rfloor \right)\left(\frac{n}{\sqrt{2}}+\left\lfloor \frac{n}{\sqrt{2}}\right\rfloor\right)+\left\lfloor \frac{n}{\sqrt{2}}\right\rfloor +\frac{1}{2}\leq\frac{1}{4}\nonumber\\&\iff 0<\left\lfloor \frac{n}{\sqrt{2}}\right\rfloor^2
+\left\lfloor \frac{n}{\sqrt{2}}\right\rfloor +\frac{1}{2}(1-n^2)\leq\frac{1}{4}.\label{eq;3573}
\end{align}
Now, both $\left\lfloor \frac{n}{\sqrt{2}}\right\rfloor^2
+\left\lfloor \frac{n}{\sqrt{2}}\right\rfloor$ and $1-n^2$ are integers. Thus, the two inequalities in (\ref{eq;3573}) cannot hold simultaneously. 

``(\hyperref[a]{a}) $\iff$ (\hyperref[c]{c})": We have $$\left\{ \frac{n}{\sqrt{2}}\right\} <\frac{1}{2}\iff\frac{n}{\sqrt{2}}-\left\lfloor \frac{n}{\sqrt{2}}\right\rfloor <\frac{1}{2}\iff\sqrt{2}n-2\left\lfloor \frac{n}{\sqrt{2}}\right\rfloor <1.$$ Since  $$\left\lfloor \sqrt{2}n\right\rfloor -2\left\lfloor \frac{n}{\sqrt{2}}\right\rfloor\leq \sqrt{2}n-2\left\lfloor \frac{n}{\sqrt{2}}\right\rfloor,$$
using $(\ref{eq;1})$, we conclude that \[ \sqrt{2}n-2\left\lfloor \frac{n}{\sqrt{2}}\right\rfloor <1\iff \left\lfloor \sqrt{2}n\right\rfloor -2\left\lfloor \frac{n}{\sqrt{2}}\right\rfloor=0\iff\left\lfloor \sqrt{2}n\right\rfloor \textnormal{ is even}.\qedhere\]
\end{proof}


\begin{thebibliography}{9}

\bibitem{Beat}
S.~Beatty, Problem 3173,
  {\it Amer. Math. Monthly},
  {\bf 3} (1926), 159.

\bibitem{OL}
        N.~J.~A.~Sloane, The On-Line Encyclopedia of Integer Sequences, OEIS  Foundation  Inc., \url{https://oeis.org}. 

\bibitem{S}
        N.~J.~A.~Sloane, Families of essentially identical sequences, \url{https://oeis.org/A115004/a115004.txt}, 2021. 
\end{thebibliography}
\end{document}